\title{\LARGE \bf
Bundle-based pruning in the max-plus curse of dimensionality free method
}
\author{Stephane Gaubert$^{1}$, Zheng Qu$^{2}$ and Srinivas Sridharan$^{3}$
\thanks{
}
\thanks{
}%
\thanks{
}%
}
\begin{document}

\maketitle
\thispagestyle{empty}
\pagestyle{empty}

\begin{abstract}

Recently a new class of techniques termed the max-plus curse of dimensionality-free methods have been developed to solve nonlinear optimal control problems. In these methods the discretization in state space is avoided by using a max-plus basis expansion of the value function. This requires  storing only the coefficients of the basis functions used for representation. However, the number of basis functions grows exponentially with respect to the  number of time steps of propagation to the time horizon of the control problem. This so called ``curse of complexity'' can be managed by applying a pruning procedure which selects the subset of basis functions that contribute most to the approximation of the value function. The pruning procedures described thus far in the literature rely on the solution of a sequence of high dimensional optimization problems which can become computationally expensive.

In  this paper we show that if the max-plus basis functions are linear and the region of interest in state space  is convex, the pruning problem can be efficiently solved by the bundle method. This approach combining the bundle method and semidefinite formulations  is applied to the quantum gate synthesis problem, in which the state space is the special unitary group (which is non-convex). This is based on the observation that the convexification of the unitary group leads to an exact relaxation. 
The results are studied and validated via examples.

\end{abstract}

\section{INTRODUCTION}

One general approach to the solution of optimal control problem is the dynamic programming principle, which 
in the deterministic case leads to a first-order, nonlinear partial differential equation, the Hamilton-Jacobi-Bellman Partial 
Differential Equation (HJB PDE). Classical numerical methods for solving the HJB PDE, as the finite difference scheme~\cite{CrandallLions83} or the semi-Lagrangian scheme~\cite{falcone,falcone-ferretti}, are all grid-based and known to suffer from the so called \firstdef{curse of dimensionality}, meaning that the 
number of grid points should grow exponentially with the space dimension.

In recent years, a new class of numerical methods has been developed after the work of Fleming and McEneaney~\cite{a5}, see~\cite{mceneaney-livre,a6,eneaneyphys,DowerMcE}.
These methods, named max-plus basis methods, exploit the linearity of the associated semigroup in the max-plus algebra. 
Only the time interval is discretized and at each discretized time step, the value function is approximated by a supremum (or infimum if the objective is minimized) of basis functions. Among several max-plus basis methods, the curse of dimensionality-free method
introduced by McEneaney~\cite{curseofdim} is of special interest because of its polynomial growth rate of the computational complexity with the space dimension. It
applies to the class of  optimal control problems where the Hamiltonian 
is given or approximated by a pointwise supremum of a finite number of ``simpler'' Hamiltonians. In particular, such Hamiltonians arise  when the control space
is discrete, for example in switched systems.
However, the number of basis functions is multiplied by the number of simpler Hamiltonians at each propagation.
Therefore in the practical implementation, a pruning operation removing at each propagation a certain number of basis functions less useful than others is required to attenuate this so called \firstdef{curse of complexity}.

In order to sort the basis functions for the pruning, an \firstdef{importance metric} is associated to each basis function. The latter measures the maximal lost caused by removing the corresponding basis function. The smaller the importance metric is, the less useful the basis function is. Hence, the pruning operation consists of sorting the basis functions by their importance metrics and selecting those with largest importance metrics. Hence the attenuation of the curse of complexity in the max-plus curse of dimensionality-free method is reduced to the calculus of importance metrics.

In the previous related works~\cite{a7,eneaneyphys,GaubertMQ11}, the importance metric is given or approximated as the optimal value of a convex semidefinite program and solved by the package CVX or YALMIP, calling the standard convex optimization solver SEDUMI or SDPT3. At each propagation in the max-plus curse of dimensionality-free method, if the number of basis functions is $m$, then we need to solve $m$ semidefinite programs of size $m$. The complexity of a standard convex optimization solver is polynomial to the program size $m$, in the worst case $O(m^{3.5})$~\cite{BoydVandenberghe}. However,
the number of basis functions $m$ is supposed to grow exponentially with respect to the number of propagation steps. Hence, it is necessary to develop a method more efficient than the general-purpose solver for the importance metric calculus when the number of basis functions $m$ is large.

Large-scale convex optimization has received many attentions. The most common approach in the literature is to reduce the complexity of the inversion of the linear equations in the interior point method, either by exploiting the sparsity~\cite[Chapter 3]{sra2012optimization} or by designing customized algorithms~\cite{LiuVandenberghe,KohKimBoyd,WallinHanssonJohansson}. 
A non-interior-point approach is the bundle method~\cite{HelmbergRendl}. In this paper, we first remark that when the basis functions are all linear and the state space is convex, the bundle method can be an alternative algorithm for the calculus of the importance metric. In order to apply the bundle method to the quantum gate synthesis problem~\cite{eneaneyphys} in which the state space is the unitary group, we need to convexify the unitary group. On the other hand, we show that the convexification of the unitary group leads to an exact relaxation. 
The efficiency of the bundle method is demonstrated via numerical examples, by comparison with the standard package CVX.

The paper is organized as follows. In Section~\ref{sec-1}, we review the general principle of the max-plus curse of dimensionality-free method, extract the importance metric calculus problem and propose Algorithm~\ref{alg:bundle} applying the bundle method.
In Section~\ref{sec-2}, we consider the quantum gate synthesis application. In Section~\ref{sec-3}, we show that the convexification of the unitary group leads to an exact relaxation. In Section~\ref{sec-4}, we present numerical results demonstrating the efficiency of the bundle method.

The notations used in the paper are the following. For  $a\in\cC$, $\Real(a)$ is the real part of $a$.
The space of $n\times n$ complex matrices is denoted by $M_n(\cC)$. For $X\in M_n(\cC)$, $X^*$ denotes its conjugate transpose. The space $M_n(\cC)$ is considered
as a real Hilbert space endowed with an inner product $\<\cdot,\cdot>$ given by:
$$
\<X_1,X_2>= \Real(\trace(X_1^*X_2)),\enspace\forall X_1,X_2\in M_n(\cC)\enspace.
$$
The induced norm of a matrix $X\in M_n(\cC)$ is denoted by $\|X\|$.
 The space of $n\times n$ positive semidefinite (resp. positive definite) matrices
is denoted by $\sym_n^+$ (resp. $\hat \sym_n^+$). For two Hermitian matrices $A,B\in M_n(\cC)$, we write $A\succeq B$ (resp. $A\succ B$) if $A-B \in \sym_n^+$ is
(resp. $A-B \in \hat\sym_n^+$).
 For $k\in \bN$,
we denote by $I_k$ the identity matrix of size $k$.  We denote by $U(n)$ the group of $n\times n$ unitary matrices and $B(n)$ the set of $n\times n$ matrices of spectral norm
less than 1:
$$
B(n):=\{X\in M_n(\cC):XX^*\preceq I_n\}\enspace.
$$

\section{MAX-PLUS CURSE OF DIMENSIONALITY FREE METHOD}\label{sec-1}
The main objective of this section is to introduce the pruning problem arising in the max-plus curse of dimensionality free method.
For this purpose, we first review briefly the  principle of the method in a general framework.

\subsection{Problem class}
Denote by $\cX\subset \R^d$ the state space and by $\cU\subset \R^m$  the control space. Let $x\in \cX$ and 
$T\in (0,+\infty]$.
Consider the following optimal control problem:
\begin{equation*}
 V_T(x):=\inf_{\bu} \int_0^T \ell(\bx(s),\bu(s)) ds +\phi (\bx(T)) \enspace,
\end{equation*}
where the state trajectory $\bx(\cdot):[0,T)\rightarrow \cX$ satisfies the dynamics:
\begin{equation*}
\dot \bx(s)=f(\bx(s),\bu(s)),\quad \bx(0)=x \in \cX
\enspace. 
\end{equation*}
The functions $f:\cX\times \cU\to\R^d$, $\ell:\cX\times \cU \to \R$  and $\phi:\cX\to\R$ represent  the dynamics, the running cost and the terminal cost, respectively. 
The value $V_{\cdot}(\cdot):\cX\times (0,T) \rightarrow \R$ gives the optimum of the objective as a function of the initial state $x$ and of the horizon $T$, called \firstdef{value function}.
In this general framework, we omit the necessary assumptions on the functions $f$, $\ell$ and $\phi$ to guarantee the existence and regularity of 
the value function.
The Hamiltonian associated to the above optimal control problem is:
$$
H(x,p)=\sup_{u\in \cU} \<p,f(x,u)>+\ell(x,u),\enspace \forall x,p\in \R^d\enspace.
$$
The corresponding Hamilton-Jacobi Partial Differential Equation (HJ PDE) is then:
\begin{equation}\label{eq-HJPDE}\left\{
\begin{array}{l}
 \frac{\partial V}{\partial t}(t,x)-H(x,\frac{\partial V}{\partial x}(t,x))=0,\qquad \forall(x,t)\in \cX\times (0,T)\enspace,\\
 V_0(x)=\phi(x),\quad \forall x\in \cX \enspace .\end{array}\right.
\end{equation}
The  Lax-Oleinik semigroup $(S_t)_{t\geq0}$ associated to the Hamiltonian $H$ is 
the evolution semigroup of the corresponding HJ PDE~\eqref{eq-HJPDE}, i.e.,
$$
S_t[\phi]=V_t,\quad\forall t\in(0,T)\enspace.
$$

In max-plus basis methods~\cite{a5}, we choose a set of \firstdef{basis functions} 
$\cB=\{\omega_i: \R^d\rightarrow \R\}_{i\in J}$ and approximate the value function $V_T$ by the infimum of a finite number of basis functions.
More precisely, we need to determine a subset $I$ and approximate $V_T$ as follows:
$$
V_T(x)\simeq \inf_{i\in I} \omega_i(x),\enspace\forall x\in \cX\enspace.
$$

The max-plus curse of dimensionality free method~\cite{curseofdim} 
applies to the class of  optimal control problems where the Hamiltonian 
is given or approximated by the supremum of ``simpler'' Hamiltonians:
$$
H(x,p)\simeq\sup_{m\in \M} H^m(x,p),\enspace \forall x,p\in \R^d\enspace,
$$
where $\M$ is a finite index set.
The term ``simpler'' refers to the condition that  for all $\omega_i \in \cB$, $t\geq 0$ and  $m\in \M$, the computation cost of $S_t^m[\omega_i]$ is 
polynomial to the state dimension $d$.
Moreover, we require that $S_t^m[\omega_i]$ is in $\cB$.

\begin{example}
In the original development of the method~\cite{curseofdim}, McEneaney considered the switching linear quadratic
optimal control problem where the Hamiltonian is given by the supremum of quadratic forms:
$$
H(x,p)=\sup_{m\in\M} (A^ mx)'p+\frac{1}{2}x'D^ mx+\frac{1}{2}p'\Sigma^ m p,\enspace \forall x,p\in \R^d\enspace.
$$
The matrices $\{A^m, D^m, \Sigma^m:m\in \M\}$ are parameters of the switching system.
The set of basis functions $\cB$ is chosen to be a set of bounded quadratic functions. 
The optimal control problem corresponding to each $H^m$ is  a linear quadratic control problem. 
Then for each quadratic basis function $\omega_i \in \cB$, $S_t^m[\omega_i]$ is  still a quadratic function  for all $t\geq 0$ and  $m\in \M$. Besides, the computation requires only the resolution of a Riccati differential equation, thus of cost $O(d^3)$.
\end{example}
\begin{example}\label{ex-quantum}The method finds application in the study of quantum circuit complexity in quantum optimal gate synthesis~\cite{eneaneyphys}. The related optimal control problem 
 is to find a least path-length trajectory on the special unitary group $SU(n)$. 
The corresponding Hamiltonian $H$ is given by:
$$
H(U,p)= \sup_{|v|=1} \<p, -i\{\sum_{k=1}^M v_k H_k\}U>-\sqrt{v^TRv}  \enspace,
$$
for all $p,U \in M_n(\cC)$. Here $\{H_1,\dots,H_M\} \subset M_n(\cC)$ are a generator of the Lie algebra of the special
unitary group $SU(n)$. The diagonal, symmetric and positive definite matrix $R\in M_n(\cC)$ is the weight matrix in the running cost.   Let $\cM=\{0,e_1,\dots,e_M\}$ denote the set of the zero vector and  the standard basis vectors
in $\R^M$. The authors proposed to approximate $H$  by:
$$
H(U,p)\simeq  \sup_{m\in \M} \<p, -i H_m U>-\sqrt{e_m^TRe_m} \enspace,
$$
for all $p,U \in M_n(\cC)$. The set of basis functions $\cB$ are chosen to be affine functions with linear part given by
a unitary matrix:
$$
\cB:=\{\omega(\cdot)=\<P,\cdot>+c:\enspace P\in U(n),c\in \R\}\enspace.
$$
The affine structure of the basis function is preserved by each semigroup $\{S_t^m\}_{t\geq 0}$  and the computation requires only a matrix multiplication, thus of cost $O(n^2)$.
\end{example}
\subsection{Principle of the method}

We discretize the time interval $[0,T]$ by small time step $\tau$.
The main idea of the max-plus curse of dimensionality free method is to approximate the semigroup $S_\tau$
by easily computable $\tilde S_\tau$:
$$
S_\tau\simeq \tilde S_\tau:=\inf_{m\in \M} S_\tau^m\enspace.
$$
 Let $N\in\bN$ such that $T=N\tau$.
First we approximate the value function $V_0=\phi$ by the infimum of a finite number of basis functions:
$$
V_0(x)\simeq \inf_{i\in I_0} \omega_i(x),\enspace\forall x\in \cX\enspace.
$$
Then we iterate for $k=1,\dots,N$:
\begin{equation}\label{a-maxite}
\begin{array}{ll}
V_{k\tau}&\simeq S_\tau[ \displaystyle\inf_{i\in I_{k-1}} \omega_i]\\
&\simeq \tilde S_\tau[ \displaystyle\inf_{i\in I_{k-1}} \omega_i]\\
&= \displaystyle\inf_{m\in \M} S_{\tau}^m[ \inf_{i\in I_{k-1}} \omega_i]\\
&=\displaystyle\inf_{m\in \M}\inf_{i\in I_{k-1}} S_{\tau}^m[\omega_i]\enspace,
\end{array}
\end{equation}
where the last equality follows from the max-plus linearity of the semigroup $\{S_{t}^m\}_{t\geq 0}$, see~\cite{a5}.

From the iteration equation~\eqref{a-maxite}, it is immediate that the number of basis functions is multiplied
by $|\M|$ at each iteration. If the computing each $S_{\tau}^m[\omega_i]$ requires $O(d^{\alpha})$ cost, then the total computation cost at the end
of $N$ iterations is $O(|\M|^N d^{\alpha})$. The most appealing characteristic of the method lies in its polynomial
growth rate in the state space dimension, compared to classical grid based methods.
 In this sense it is considered as a curse of dimensionality free method. However, in practical implementation of the method,  we need to incorporate a pruning operation, denoted by $\pP$, in order to 
reduce the number of basis functions:
\begin{equation}\label{a-maxitepr}
\begin{array}{ll}
V_{k\tau}&\simeq \displaystyle \pP\circ\tilde S_\tau[ \inf_{i\in I_{k-1}} \omega_i]\enspace\\
&\simeq \pP[\inf_{m\in \M}\inf_{i\in I_{k-1}} S_{\tau}^m[\omega_i]]\enspace.
\end{array}
\end{equation}

 Therefore, the pruning operation is a critical element in the practical implementation of the method, without which the number of basis functions explodes after a few iterations.

\subsection{Pruning techniques}

The pruning problem can be formulated as follows.
Let $\{\omega_0,\dots,\omega_m\}$ be a set of basis functions and
$$
\psi=\inf_{i=0,\dots,m} \omega_i\enspace.
$$
Then $\pP$ applied to $\psi$ approximates $\psi$ by selecting a subset $J\subset \{0,1,\dots,m\}$:
$$
\psi\simeq \pP[\psi]=\inf_{j\in J}\omega_j\enspace.
$$
The selection criteria can be the minimization of the approximation error for a limited cardinality
of $J$.
The latter problem was formulated in~\cite{GaubertMQ11} as a continuous $k$-median or $k$-center facility location problem, when minimizing
the $L_1$  or  $L_\infty$ approximation error. In most of the existing pruning algorithms, one basic task is to calculate the 
so called \firstdef{importance metric} of each basis function. The latter measures the maximal lost caused by removing the corresponding basis function.
More precisely, for each $j\in \{0,\dots,m\}$,
the importance metric $\delta_j$ of the basis function $\omega_j$ measured over the state space $\cX$, is defined by:
\begin{align}\label{a-importancemetric}
 \delta_j:=\max_{x\in \cX} \min_{ i\neq j} \omega_i(x)-\omega_j(x)\enspace.
\end{align}
In some cases, especially when the state space $X$ is not bounded, a normalization shall be considered, see for example~\cite{a7}.

Once we get all the values $\{\delta_0,\dots,\delta_m\}$, we can list them in non-increasing order and select the $k$ first corresponding
basis functions, as in~\cite{a7}. Or, we can efficiently generate witness points from the optimal solution, construct a $k$-center problem and apply some polynomial combinatorial algorithms, as in~\cite{GaubertMQ11}. Moreover, it is worth mentioning that if $\delta_j<0$
then the basis function $\omega_j$ can be pruned without error.

\subsection{Bundle method}\label{subsec-bundle}
So far we have seen that one central issue in the max-plus basis method is the pruning operation, which 
reduces to the calculus of the importance metrics $\{\delta_0,\dots,\delta_m\}$. In this section, we suppose that all the basis functions in $\cB$ are affine and focus on the calculus of 
the importance metric of the basis function $\omega_0$:
\begin{equation}\label{a-delta0}
 \begin{array}{ll}\mathrm{maximize} &\lambda\enspace,\\
\mathrm{subject~ to:}& x\in \cX;\\
& \lambda \leq \omega_i(x)-\omega_0(x),\enspace i=1,\dots,m\enspace.
\end{array}
\end{equation}

As we mentioned, in the max-plus curse of dimensionality free method,  the number of basis functions $m$ grows geometrically with respect to the number of iterations $N$ and needs to be large for improved precision in the max-plus basis method.
 It is therefore necessary to know how to deal with problem~\eqref{a-delta0} with large $m$.

If $\cX$ is convex,  then~\eqref{a-delta0} is a convex optimization problem (maximizing a concave function) and can be solved in polynomial time by the interior point based methods.
It is known that the interior point based methods have quadratic convergence: the number of iterations to yield the duality gap accuracy $\epsilon$ is $O(\sqrt{m}\ln(1/\epsilon))$~\cite{BoydVandenberghe}. However, each iteration one needs to solve a set of linear equations of size $O(m)$, called Newton equations.  Efficiency of the interior point based method depends on the complexity of the linear equations. General-purpose convex optimization packages like CVX~\cite{GrantBoydcvx} or YALMIP~\cite{yalmip} rely on sparse matrix factorizations to solve the Newton equations efficiently. While this approach is very successful in linear programming, it appears to be less effective for other classes of problems (for example, semidefinite programming)~\cite[Chapter 3]{sra2012optimization}. Some scalable customized interior point algorithms have been developed for large-scale convex optimization problems with non-sparse problem structure, for specific problem families~\cite{LiuVandenberghe,KohKimBoyd,WallinHanssonJohansson}.

The main purpose of this paper is to propose a general and scalable  algorithm for solving~\eqref{a-delta0}, with no other requirement on the problem structure than the linearity of basis functions in $\cB$ and the convexity of $\cX$.
 Our approach is the bundle method, known for solving large-scale non-smooth convex optimization problems~\cite{Kiwiel90,SchrammZowe,LemarechalNemirovskiNesterov}.  We mention that the bundle method has already been exploited as an alternative of the interior-point method for large-scale semidefinite programming~\cite{HelmbergRendl} and shows considerably improved efficiency.

Denote:
$$
\phi(x)=\min_{i=1,\dots,m} \omega_i(x)-\omega_0(x),\enspace \forall x\in \cX\enspace.
$$
The basic principle of the bundle method is to use limited number of supporting affine functions to approximate the objective function $\phi$.
 The algorithm can be described as follows.



\begin{algorithm}[h]
\begin{algorithmic}[1]
\STATE{$\mathbf{Parameter}$: $\mu>0$; $\epsilon>0$; $\gamma>0$}
\STATE{$\mathbf{Input}$:  an initial point $x_1\in \cX$; a set $J_0\subset \{1,\dots,m\}$;}
\FOR{$k=1,2,\dots$}
\STATE $i_k\leftarrow \argmin\{ \omega_i(x_k)-\omega_0(x_k): i=1,\dots,m\}$; 
\STATE $J_k=J_{k-1}\cup \{i_k\}$;
\STATE Update model $$\phi_k^{CP}:=\inf_{i\in J_k} {\omega_i-\omega_0}\enspace;$$
\STATE $v_k\leftarrow \phi(x_k)$;
\STATE $y_k\leftarrow \argmax \{\phi_k^{CP}(y): y\in \cX; |y-x_k|\leq \mu\}$;
\STATE $w_k \leftarrow \phi_k^{CP}(y_k)$;
\IF{ $w_k-v_k<\epsilon$ and $|y_k-x_k|<\mu$}
\STATE stop;
\ELSE
\IF{$\phi(y_k)-v_k>\gamma (w_k-v_k)$}
\STATE $x_{k+1}\leftarrow y_k$;
\ELSE
\STATE $j\leftarrow \argmin\{\omega_i(y_k)-\omega_0(y_k): i=1,\dots,m \}$ ;
\STATE $J_k=J_k\cup \{j\}$;
\ENDIF
\ENDIF
\ENDFOR
\STATE{$\mathbf{Output}$: optimal value $v_k$; optimal solution $x_k$.}
\end{algorithmic}
\caption{Trust region Bundle method}
\label{alg:bundle}
\end{algorithm}

At iteration $k$, we approximate the objective function $\phi$ by $\phi_k^{CP}$,
which is the infimum of the supporting hyperplanes of the sequence of points
$\{y_1,\dots,y_{k-1}\}$.
 The next point $y_k$ is the maximizer of $\phi_k^{CP}$ on a region close to the current center $x_k$. It is the solution of the following optimization 
problem:
\begin{equation}\label{a-subprob}
 \begin{array}{ll}\mathrm{maximize} &\lambda\enspace,\\
\mathrm{subject~ to:}& y\in \cX;\\
& \lambda \leq \omega_i(y)-\omega_0(y),\enspace i\in J_k\enspace;\\
& |y-x_k|\leq \mu\enspace.
\end{array}
\end{equation}
Apart from the proximal constraint, this optimization problem
is of the same form as~\eqref{a-delta0} but with only $|J_k|$ linear constraints. Besides, since we add at most one constraint per iteration, we know that $|J_k|\leq k$.

\begin{rem}\label{rem-cb}
For a given convex state space $\cX$ and a given set of linear basis functions $\cB$, 
denote by $c(m)$ the maximal computation cost required by a standard convex optimization solver for solving~\eqref{a-delta0}. 
Then the complexity of Algorithm~\ref{alg:bundle} is bounded by:
$$
\sum_{k=1}^{K_0} (c(k)+O(m))\enspace.
$$
where $K_0$ denotes the number of iterations of Algorithm~\ref{alg:bundle}. From the latter expression on the complexity bound we read out the central thought of the bundle method: 
solve a large-scale convex optimization problem by solving a sequence of smaller size convex optimization problem. 
\end{rem}

\section{Application to the quantum optimal synthesis example}\label{sec-2}

In this section, we extract the pruning problem appearing in the quantum optimal synthesis application, presented
in Example~\ref{ex-quantum}. For more background we refer to~\cite{eneaneyphys}.

As we mentioned in Example~\ref{ex-quantum}, the state space $\cX$ is the special unitary group $SU(n)$ and 
the basis functions  are chosen to be affine functions:
$$
\omega_i(X)=\<P_i,X>+c_i,\enspace\forall X\in M_n(\cC)\enspace,\enspace i=0,\dots,m\enspace,
$$
for some $\{P_0,P_1,\dots,P_m\}\subset U(n)$  unitary matrices and $\{c_0,c_1,\dots,c_m\}\subset \R$.
Then the optimization problem in the pruning procedure can be described as:
\begin{align}\label{a-optiSU}
\delta_0=\max_{X\in SU(n)} \min_{1\leq i\leq m} \<P_i-P_0, X>+c_i-c_0\enspace.
\end{align}
We release the constraint on the determinant and compute the following upper bound of $\delta_0$:
\begin{align}\label{a-optiU}
\bar \delta_0=\max_{X\in U(n)} \min_{1\leq i\leq m} \<P_i-P_0, X>+c_i-c_0\enspace.
\end{align}
In order to obtain a convex optimization problem, we consider
a relaxation of~\eqref{a-optiU}:
\begin{align}\label{a-relaxU}
 \max_{X\in B(n)} \min_{1\leq i\leq m} \<P_i-P_0, X>+c_i-c_0\enspace.
\end{align}
By Schur's complement lemma~\cite[Lemma 6.3.4]{BenTalGhaouiNemirovski}, the constraint $X \in B(n)$ is equivalent to the following 
semidefinite matrix constraint:
$$
\left( \begin{array}{ll} I_n  & X \\ X^* & I_n\end{array}\right)\succeq 0\enspace.
$$
Hence problem~\eqref{a-relaxU} falls into the class of \firstdef{disciplined} convex programming problems~\cite{GrantBoydYinyu} and can be solved by calling directly the package CVX, or by the bundle method described in Algorithm~\ref{alg:bundle}.

\section{CONVEXIFICATION OF THE UNITARY GROUP}\label{sec-3}
In this section
we show that~\eqref{a-relaxU} is an exact relaxation of~\eqref{a-optiU}. Our main result is Theorem~\ref{th-main}. We first prove some useful lemmas.
\begin{lemma}\label{l-interiorimpos}
 Let $\{P_i\}_{i=0,\dots,m}$ be a set of $n\times n$ unitary matrices and $X\in M_n(\cC)$. If $XP_0$ is an interior point in the convex hull of
$\{XP_i\}_{i=1,\dots,m}$, then $XP_i=XP_0$ for all $i=1,\dots,m$. 
\end{lemma}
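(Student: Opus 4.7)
The plan is to exploit the fact that right-multiplication by a unitary is an isometry for the inner product $\<\cdot,\cdot>$ on $M_n(\cC)$, so that all the matrices $\{XP_i\}_{i=0,\ldots,m}$ lie on a common sphere centered at the origin. A point on that sphere which is expressed as a strictly convex combination of others on the same sphere must coincide with each of them, and this is what will force $XP_i = XP_0$.

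The first step is to record the isometry observation: for every $i$, using $P_iP_i^* = I_n$ and the cyclic property of the trace,
$$\|XP_i\|^2 = \trace(P_i^* X^* X P_i) = \trace(X^*X) = \|X\|^2,$$
so all the $XP_i$ share the common norm $\|X\|$. The second step is to unpack the interior hypothesis. A standard fact from convex analysis states that $XP_0$ lies in the relative interior of $\mathrm{conv}\{XP_i : i=1,\ldots,m\}$ if and only if it admits a representation
$$XP_0 = \sum_{i=1}^m \lambda_i XP_i, \qquad \lambda_i > 0,\ \sum_{i=1}^m \lambda_i = 1.$$
The same representation holds a fortiori under the stronger reading where ``interior'' refers to the absolute topological interior in $M_n(\cC)$.

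The conclusion then follows from the identity
$$\sum_i \lambda_i \|v_i\|^2 - \Bigl\|\sum_i \lambda_i v_i\Bigr\|^2 = \frac{1}{2}\sum_{i,j} \lambda_i \lambda_j \|v_i - v_j\|^2,$$
valid in any real pre-Hilbert space, applied to $v_i := XP_i$. By the first step the left-hand side equals $\|X\|^2 - \|X\|^2 = 0$, so the right-hand sum vanishes; since every $\lambda_i > 0$, this forces $XP_i = XP_j$ for all pairs $i,j$, and hence $XP_i = XP_0$ for every $i$ as claimed.

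The only mild subtlety I foresee is the correct reading of ``interior point'': to make sense of the statement in the degenerate case where $\mathrm{conv}\{XP_i\}$ is not full-dimensional in $M_n(\cC)$, one should interpret it as relative interior and invoke the characterization by strictly positive convex combinations. Once that is granted, the use of unitarity enters only through the elementary norm computation, and the rest is a one-line strict-convexity argument.
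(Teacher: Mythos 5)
Your proof is correct and follows essentially the same route as the paper's: both observe that right-multiplication by a unitary preserves the Frobenius norm, write $XP_0$ as a strictly positive convex combination of the $XP_i$, and conclude by strict convexity of the (squared) norm, which you simply make explicit via the variance identity. No substantive difference.
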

\begin{proof}
 It is immediate that if $XP_0$ is an interior point in the convex hull of
$\{XP_i\}_{i=1,\dots,m}$, then 
there are $\alpha_1,\dots,\alpha_m>0$  such that
$$
\sum_{i=1}^m\alpha_i=1,\enspace \sum_{i=1}^m \alpha_i XP_i =XP_0\enspace.
$$
The Frobenius norm of $XP_i$ equals to that of $X$ for all $i=0,\dots,m$. By the strict convexity of the Frobenius norm, we deduce that necessarily
$$
XP_i=XP_0\enspace,\quad i=1,\dots,m\enspace.
$$
\end{proof}

The \firstdef{tangent cone} to $B(n)$ at $X\in B(n)$ is defined by~\cite[p.204]{RockafellarWets}:
$$
T_{B(n)}(X)=\cl \{ \lambda(Z-X): \lambda\geq 0, Z \in B(n) \}\enspace.
$$ 
\begin{lemma}\label{l-tangentconeofdiagonal}
Let $1\leq k\leq n$ and $
\Sigma$ be a diagonal matrix with positive real diagonal entries $(\lambda_1,\cdots,\lambda_n)$ such that $\lambda_j=1$ for all $j=1,\dots,k-1$ and $\lambda_j<1$ for all $j=k,\cdots,n$. Then
\begin{align*}
\big\{ \left( \begin{array}{ll} X_1  & X_2 \\ X_3 & X_4\end{array}\right)\in M_n(\cC): -X_1-X_1^*\in \hat \sym_{k-1}^+  \big\}\subset T_{B(n)}(\Sigma)\enspace.
\end{align*}
\end{lemma}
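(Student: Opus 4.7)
The plan is to establish membership in the tangent cone by showing that for any $M$ in the claimed set, $\Sigma + \epsilon M \in B(n)$ for all sufficiently small $\epsilon > 0$. Then $M = \epsilon^{-1}((\Sigma+\epsilon M) - \Sigma)$ exhibits $M$ as an element of the (unclosed) cone $\{\lambda(Z-\Sigma):\lambda\ge 0,\ Z\in B(n)\}$, which is contained in $T_{B(n)}(\Sigma)$.

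Write $\Sigma$ and $M$ in the $(k-1)+(n-k+1)$ block form dictated by the hypothesis, so $\Sigma = \mathrm{diag}(I_{k-1},D)$ where $D = \mathrm{diag}(\lambda_k,\dots,\lambda_n)$ satisfies $I - D^2 \succ 0$. The condition $\Sigma + \epsilon M \in B(n)$ is $I_n - (\Sigma + \epsilon M)(\Sigma+\epsilon M)^* \succeq 0$. Expanding, this residual matrix takes the block form
$$
R(\epsilon)=\begin{pmatrix} A(\epsilon) & B(\epsilon) \\ B(\epsilon)^* & C(\epsilon) \end{pmatrix},
$$
with $A(\epsilon) = -\epsilon(X_1+X_1^*) - \epsilon^2 (MM^*)_{11}$, $C(\epsilon) = (I - D^2) - \epsilon(X_4 D + D X_4^*) - \epsilon^2(MM^*)_{22}$, and $B(\epsilon) = -\epsilon(X_2 D + X_3^*) - \epsilon^2(MM^*)_{12}$.

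I would then apply the Schur complement: $R(\epsilon)\succeq 0$ iff $A(\epsilon) \succ 0$ and $C(\epsilon) - B(\epsilon)^* A(\epsilon)^{-1} B(\epsilon) \succeq 0$. The strict hypothesis $-(X_1+X_1^*) \in \hat\sym_{k-1}^+$ yields a constant $c>0$ with $-(X_1+X_1^*) \succeq c I_{k-1}$, hence $A(\epsilon) \succeq \tfrac{c}{2}\epsilon I_{k-1}$ for all sufficiently small $\epsilon$; similarly $I - D^2 \succ 0$ yields $C(\epsilon) \succeq c' I_{n-k+1}$ for small $\epsilon$. Since $B(\epsilon) = O(\epsilon)$ in operator norm, we get the crucial estimate
$$
\bigl\| B(\epsilon)^* A(\epsilon)^{-1} B(\epsilon)\bigr\| = O(\epsilon^2) \cdot O(\epsilon^{-1}) = O(\epsilon),
$$
so the Schur complement stays positive definite as $\epsilon \to 0^+$.

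The main subtlety is exactly this balance: the $(1,1)$ block $A(\epsilon)$ is only of order $\epsilon$, not $O(1)$, so it is not immediately obvious that $B(\epsilon)^* A(\epsilon)^{-1} B(\epsilon)$ is controllable. The point is that $B(\epsilon)$ itself vanishes like $\epsilon$, and the strict positivity $-X_1 - X_1^* \succ 0$ (not merely semidefiniteness) gives an $A(\epsilon)^{-1}$ of order $\epsilon^{-1}$, yielding overall $O(\epsilon)$. Once this is established, choosing $\epsilon_0$ small enough that both $A(\epsilon_0) \succ 0$ and the Schur complement is positive semidefinite gives $\Sigma + \epsilon_0 M \in B(n)$, which proves $M \in T_{B(n)}(\Sigma)$.
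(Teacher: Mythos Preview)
Your argument is correct and follows essentially the same route as the paper: both show that $\Sigma+\epsilon M\in B(n)$ for all sufficiently small $\epsilon>0$ by expanding $I_n-(\Sigma+\epsilon M)(\Sigma+\epsilon M)^*$ in the $(k-1)+(n-k+1)$ block decomposition and applying the Schur complement, using $-(X_1+X_1^*)\succ 0$ and $I-D^2\succ 0$ to control the two diagonal blocks. Your asymptotic bookkeeping ($A(\epsilon)\succeq \tfrac{c}{2}\epsilon I$, $B(\epsilon)=O(\epsilon)$, hence $B^*A^{-1}B=O(\epsilon)$) makes the balance explicit where the paper simply asserts that a small enough $\Delta$ exists; the only cosmetic omission is the degenerate case $k=1$, where the $(1,1)$ block is empty and the claim reduces to $I-D^2+O(\epsilon)\succ 0$.
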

\begin{proof}
If $k=1$ then it is clear that $\Sigma$ is an interior point of $B(n)$ and the tangent cone at $\Sigma$ is the whole space $M_n(\cC)$.

We next consider the case when $1<k\leq n$. For ease of proof, we write $\Sigma$  into block matrix:
$$
\Sigma=\left( \begin{array}{ll} I_{k-1}  & 0 \\ 0 & \Sigma_4\end{array}\right)\enspace,
$$
where  $\Sigma_4$ is a diagonal matrix such that $I_{n-k+1}-\Sigma_4\Sigma_4^* \succ 0$.
  Let
$$
X=\left( \begin{array}{ll} X_1  & X_2 \\ X_3 & X_4\end{array}\right)\in M_n(\cC)\enspace,
$$
such that $-X_1-X_1^*\in \hat \sym_{k-1}^+ $. For notational simplicity, let $H_1\in M_{k-1}(\cC)$, $H_4\in M_{n-k+1}(\cC)$ and
$H_2\in \cC^{(k-1)\times (n-k+1)}$ such that:
$$
XX^*=\left( \begin{array}{ll} H_1  & H_2 \\ H_2^* & H_4\end{array}\right)\enspace.
$$
Let any $\Delta >0$. We have:
\begin{align*}
&(\Sigma+\Delta X)(\Sigma+\Delta X)^*-I_n\\
&=\Sigma\Sigma^*+\Delta (X\Sigma^*+ \Sigma X^*)-I_n+\Delta^2 XX^*\\
&=\left( \begin{array}{ll} \Delta (X_1+X_1^*)  & \Delta (X_2\Sigma_4^*+X_3^*) \\ \Delta(\Sigma_4X_2^*+X_3)  & \Sigma_4\Sigma_4^*-I_{n-k+1} +\Delta(\Sigma_4X_4^*+X_4\Sigma_4^*)\end{array}\right)\\
&\qquad+\left( \begin{array}{ll} \Delta^2 H_1  & \Delta^2 H_2 \\ \Delta^2 H_2^*  & \Delta^2 H_4\end{array}\right)
\end{align*}
By Schur's complement lemma~\cite[Lemma 6.3.4]{BenTalGhaouiNemirovski}, we know that $(\Sigma+\Delta X)(\Sigma+\Delta X)^*\prec I_n$ if and only if $X_1+X_1^*+\Delta H_1\prec 0$ and
\begin{align*}
\begin{array}{l}
I_{n-k+1}-\Sigma_4\Sigma_4^*-\Delta (\Sigma_4X_4^*+X_4\Sigma_4^*)-\Delta^2 H_4\\+\Delta  (\Sigma_4X_2^*+X_3+\Delta H_2^*)(X_1+X_1^*+\Delta H_1)^{-1}(X_2\Sigma_4^*+X_3^*+\Delta H_2)
\\ \succ 0\enspace.
\end{array}
\end{align*}
Since $X_1+X_1^*\prec 0$ and $I_{n-k+1}-\Sigma_4\Sigma_4^* \succ 0$, there is $\Delta>0$ such that the above inequalities hold thus $(\Sigma+\Delta X)(\Sigma+\Delta X)^*\prec I_n$.
Hence, $X$ is in the tangent cone of $B(n)$. 
\end{proof}

In the sequel, let $\{P_i\}_{i=0,\dots,m}$ be a set of $n\times n$ unitary matrices and $\{c_i\}_{i=0,\dots,m}$ be a set of real numbers. 
For all $\beta>0$, denote:
\begin{align}\label{a-phibeta}
\phi_{\beta}(X)=-\beta^{-1} \log(\sum_{i=1}^m e^{-\beta(\<P_i-P_0, X>+c_i-c_0)}),\enspace \forall X\in M_n(\cC)\enspace.
\end{align}
\begin{lemma}\label{l-equicontphi}
 There is $K>0$ such that $\phi_\beta$ is $K$-Lipschitz continuous for all $\beta>0$.
\end{lemma}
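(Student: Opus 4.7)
The plan is to compute an explicit bound on the gradient of $\phi_\beta$ that is independent of $\beta$, and then invoke the mean value theorem. The key observation is that although $\phi_\beta$ is built via a log-sum-exp smoothing whose curvature blows up as $\beta \to \infty$, the \emph{gradient} of a log-sum-exp is always a convex combination of the gradients of the constituent affine functions, and this convex-combination structure is insensitive to $\beta$.

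First I would define, for $i=1,\dots,m$, the affine functions $f_i(X) = \langle P_i-P_0,X\rangle + c_i - c_0$, so that $\phi_\beta(X) = -\beta^{-1}\log\sum_i e^{-\beta f_i(X)}$. A direct chain rule computation gives
\begin{equation*}
\nabla \phi_\beta(X) \;=\; \sum_{i=1}^m \lambda_i^\beta(X)\,(P_i - P_0), \qquad \lambda_i^\beta(X) := \frac{e^{-\beta f_i(X)}}{\sum_{j=1}^m e^{-\beta f_j(X)}} \;.
\end{equation*}
The weights $\lambda_i^\beta(X)$ are nonnegative and sum to $1$, so $\nabla \phi_\beta(X)$ is a convex combination of the fixed vectors $P_i - P_0$, independent of $\beta$.

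Next I would bound each $\|P_i - P_0\|$: since every $P_i$ is unitary, $\|P_i\|^2 = \Real(\trace(P_i^*P_i)) = n$, hence $\|P_i - P_0\| \leq 2\sqrt{n}$ by the triangle inequality. Applying the triangle inequality to the convex combination yields $\|\nabla\phi_\beta(X)\| \leq 2\sqrt{n}$ for every $X \in M_n(\cC)$ and every $\beta>0$. Setting $K := 2\sqrt{n}$, the mean value theorem (applied along the segment joining any $X,Y \in M_n(\cC)$) gives $|\phi_\beta(X)-\phi_\beta(Y)| \leq K\|X-Y\|$, which is the desired uniform Lipschitz bound.

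There is no real obstacle here; the only subtle point is making sure that the Lipschitz constant does not depend on $\beta$, and the convex-combination formula for $\nabla\phi_\beta$ makes this transparent. I would also note that the same argument shows $\phi_\beta$ is $K$-Lipschitz with the same constant $K$ as the pointwise limit $\min_i f_i$, which is consistent with the fact that $\phi_\beta \to \min_i f_i$ uniformly as $\beta\to\infty$.
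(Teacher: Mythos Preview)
Your proof is correct and follows essentially the same approach as the paper: compute the gradient of $\phi_\beta$, observe it is a convex combination of the vectors $P_i-P_0$ (hence bounded independently of $\beta$), and conclude Lipschitz continuity from the derivative bound. The only cosmetic difference is that you push the bound one step further to an explicit constant $K=2\sqrt{n}$, whereas the paper stops at $K=\max_i\|P_i-P_0\|$.
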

\begin{proof}
Let $\beta>0$. For all $X,Y\in M_n(\cC)$ we have:
\begin{align*}
D\phi_{\beta}(X) \circ Y
=\sum_{i=1}^m \alpha_i \<P_i-P_0,  Y >\enspace,
\end{align*}
where 
$$
\alpha_i=(\sum_{i=1}^m e^{-\beta(\<P_i-P_0, X>+c_i-c_0)})^{-1}e^{-\beta(\<P_i-P_0, X>+c_i-c_0)} \enspace.
$$
Thus $\alpha_1,\dots, \alpha_m>0$ and $\sum_{i=1}^m \alpha_i=1$.
Hence, for all $X,Y\in M_n(\cC)$,
$$
\|D\phi_{\beta}(X) \circ Y\|\leq \sum_{i=1}^m \alpha_i \|P_i-P_0\|\|Y\|\leq \max_i \|P_i-P_0\| \|Y\|\enspace.
$$
It follows that for all $X\in M_n(\cC)$,
$$\|D\phi_{\beta}(X)\|\leq \max_i\|P_i-P_0\|\enspace.$$
Therefore let $K=\max_i\|P_i-P_0\|$, the function $\phi_\beta$ is $K$-Lipschitz for all $\beta>0$.
\end{proof}

\begin{prop}\label{p-minimizerofphibeta}
Let $\beta>0$.
The optimal solution of the following optimization problem
\begin{align}\label{a-PiPjcicj}
\max_{X\in B(n)} \phi_{\beta}(X)\enspace
\end{align} contains a unitary matrix.
\end{prop}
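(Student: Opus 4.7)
\emph{Plan.} Let $X^\star$ be any maximizer of \eqref{a-PiPjcicj} in $B(n)$; the task is to exhibit a unitary matrix with the same $\phi_\beta$-value. Writing the SVD $X^\star = U \Sigma V^*$ and using that $X \mapsto U^* X V$ preserves $B(n)$ while sending each $P_i$ to the unitary $\tilde P_i := U^* P_i V$, we may assume $X^\star = \Sigma$ is diagonal with singular values $\lambda_1 \geq \cdots \geq \lambda_n$. If $\Sigma$ is already unitary we are done; otherwise let $k$ be the smallest index with $\lambda_k < 1$, so that $\lambda_1 = \cdots = \lambda_{k-1} = 1$ (possibly $k = 1$, meaning $\Sigma$ lies in the interior of $B(n)$). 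Since $\tilde\phi_\beta$ is smooth and concave (log-sum-exp of affine functions), the first-order optimality condition reads $\nabla \tilde\phi_\beta(\Sigma) \in N_{B(n)}(\Sigma)$, the normal cone at $\Sigma$.

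\emph{Key step: block structure of the gradient.} By Lemma \ref{l-tangentconeofdiagonal} and closure of the tangent cone, every matrix $Y = \begin{pmatrix} Y_1 & Y_2 \\ Y_3 & Y_4 \end{pmatrix}$ with top-left $(k-1) \times (k-1)$ block satisfying $Y_1 + Y_1^* \preceq 0$ lies in $T_{B(n)}(\Sigma)$; in particular the blocks $Y_2, Y_3, Y_4$ are unrestricted and $\pm Y$ both lie in the tangent cone whenever $Y_1 = 0$. Writing out $\langle \nabla \tilde\phi_\beta(\Sigma), Y\rangle \leq 0$ accordingly forces $\nabla \tilde\phi_\beta(\Sigma) = \begin{pmatrix} N_1 & 0 \\ 0 & 0 \end{pmatrix}$ with $N_1$ Hermitian and positive semidefinite. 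On the other hand, differentiating $\tilde\phi_\beta$ directly gives $\nabla \tilde\phi_\beta(\Sigma) = \sum_{i=1}^m \alpha_i \tilde P_i - \tilde P_0$ with strictly positive softmax weights $\alpha_i$. Comparing columns $j \geq k$ of the two expressions yields $\sum_{i=1}^m \alpha_i (\tilde P_i)_{\cdot, j} = (\tilde P_0)_{\cdot, j}$ for every such $j$. I expect this transcription of the tangent cone lemma into the exact block structure of the gradient to be the most delicate point of the proof.

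\emph{Rigidity and conclusion.} Each $(\tilde P_i)_{\cdot, j}$ is a unit vector in $\cC^n$, and a convex combination of unit vectors is itself a unit vector only if all summands with positive weight coincide, by strict convexity of the Euclidean norm. Hence $(\tilde P_i)_{\cdot, j} = (\tilde P_0)_{\cdot, j}$ for every $i = 1, \ldots, m$ and every $j \geq k$, so that $\tilde P_i - \tilde P_0$ has zero last $n - k + 1$ columns and $\tilde\phi_\beta(X)$ depends on $X$ only through its first $k - 1$ columns. The unitary $W := \begin{pmatrix} I_{k-1} & 0 \\ 0 & W_4 \end{pmatrix}$ for an arbitrary $W_4 \in U(n - k + 1)$ shares those first $k - 1$ columns with $\Sigma$, so $\tilde\phi_\beta(W) = \tilde\phi_\beta(\Sigma)$; transferred back through the change of variables, $U W V^*$ is the desired unitary maximizer of \eqref{a-PiPjcicj}. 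The interior case $k = 1$ is the same argument applied with an empty top-left block: the gradient vanishes and Lemma \ref{l-interiorimpos} then forces $\tilde P_i = \tilde P_0$ for all $i$, so $\tilde\phi_\beta$ is constant and any unitary will do.
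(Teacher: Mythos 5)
Your proof is correct and follows essentially the same route as the paper's: reduce by SVD to a diagonal maximizer $\Sigma$, combine the first-order optimality condition with Lemma~\ref{l-tangentconeofdiagonal} to force the gradient $\sum_i\alpha_i(\tilde P_i-\tilde P_0)$ into the block form $\mathrm{diag}(N_1,0)$, and then use a strict-convexity rigidity argument to conclude that the $\tilde P_i$ agree with $\tilde P_0$ where it matters, so that $\Sigma$ can be replaced by a unitary without changing the objective. The only cosmetic differences are that you apply the rigidity column-by-column with the Euclidean norm where the paper left-multiplies by $I_n-\Sigma$ and invokes Lemma~\ref{l-interiorimpos} (Frobenius norm), and that you complete $\Sigma$ to $\mathrm{diag}(I_{k-1},W_4)$ rather than to $I_n$.
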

\begin{proof}
Let $U_0\in B(n)$ be an optimal solution of~\eqref{a-PiPjcicj}. Suppose that $U_0$ is not unitary.
 Consider the SVD decomposition of $U_0$ given by
 $$U_0=V_1 \Sigma V_2\enspace,$$ where $\Sigma $ is a
diagonal matrix with positive real diagonal entries $(\lambda_1,\dots,\lambda_n)$, listed in non-increasing order. Let $k\in\{1,\dots,n\}$ such that $\lambda_i=1$ for all $i=1,\dots,k-1$ and $\lambda_j<1$ for all
$j=k,\dots,n$. Then $\Sigma$ is an optimal solution of the following optimization problem:
\begin{align}\label{a-PiPjcicj2}
\max_{X\in B(n)} \phi_{\beta}( V_1XV_2)\enspace.
\end{align}
The first-order optimality condition~\cite[p.207]{RockafellarWets} implies that
\begin{align}\label{a-necoptcond}
D\phi_{\beta}(V_1\Sigma V_2) \circ (V_1 Y V_2) \leq 0,\quad \forall Y \in T_{B(n)}(\Sigma)\enspace.
\end{align}
We have:
\begin{align*}
D\phi_{\beta}(V_1\Sigma V_2) \circ(V_1YV_2)
=\sum_{i=1}^m \alpha_i \<P_i-P_0, V_1 Y V_2>\enspace,
\end{align*}
where $\alpha_1,\dots, \alpha_m>0$ and $\sum_{i=1}^m \alpha_i=1$.
Therefore,
$$
D\phi_{\beta}(V_1\Sigma V_2) \circ(V_1YV_2)=\<\sum_{i=1}^m \alpha_i(V_1^*P_iV_2^*-V_1^*P_0V_2^*), Y>
$$
 By the first-order optimality condition~\eqref{a-necoptcond} and Lemma~\ref{l-tangentconeofdiagonal}, we deduce that
$$ \<\sum_{i=1}^m \alpha_i(V_1^*P_iV_2^*-V_1^*P_0V_2^*), X>\leq 0\enspace,$$
for all $$
X=\left( \begin{array}{ll} X_1  & X_2 \\ X_3 & X_4\end{array}\right)\in M_n(\cC)\enspace,
$$
such that $-X_1-X_1^*\in \hat \sym_{k-1}^+ $. 
Hence,
$$
\sum_{i=1}^m \alpha_i(V_1^*P_iV_2^*-V_1^*P_0V_2^*)
=\left( \begin{array}{ll} Z  & 0 \\ 0 & 0\end{array}\right)
$$
for some $Z$ such that $Z+Z^*\in \sym_{k-1}^+$.
Therefore,
$$
(I_n-\Sigma)\sum_{i=1}^m \alpha_i (V_1^*P_iV_2^*-V_1^*P_0V_2^*)=0\enspace.
$$
By Lemma~\ref{l-interiorimpos}, we know that 
$$
(I_n-\Sigma)V_1^*P_iV_2^* =(I_n-\Sigma)V_1^*P_0V_2^*, \enspace i=1,\dots,m\enspace.
$$
This implies that
$$
\<P_i-P_0,V_1V_2>=\<P_i-P_0, V_1 \Sigma V_2>,\enspace i=1,\dots,m\enspace.
$$
Therefore,
$$
\phi_{\beta}(V_1 V_2)=\phi_{\beta} (V_1 \Sigma V_2)=\max_{X\in B(n)} \phi_{\beta} (X )\enspace.
$$
Hence $V_1V_2$ is an optimal solution of~\eqref{a-PiPjcicj}.
\end{proof}

\begin{theo}\label{th-main}
The set of optimal solutions of the following optimization problem:
\begin{align}\label{a-PiPjcicj3}
\max_{ X\in B(n)} \min_{1\leq i\leq m} \Real(\<P_i-P_0, X>)+c_i-c_0\enspace
\end{align}
 contains a unitary matrix.
\end{theo}
\begin{proof}
 Denote
$$
\phi(X)=\min_{1\leq i\leq m} \Real(\<P_i-P_0, X>)+c_i-c_0,\enspace\forall X\in M_n(\cC)\enspace.
$$
By Lemma~\ref{l-equicontphi} and the Arzel\`a-Ascoli theorem, the function $\phi_{\beta}$ defined in~\eqref{a-phibeta} converges uniformly to $\phi$ as $\beta$ goes to $+\infty$.
For each $\beta$, by Proposition~\ref{p-minimizerofphibeta} the intersection
 $$ O_\beta:=U(n) \cap \underset{X\in B(n)}{\argmax} \phi_{\beta}$$ is not empty.  Since the convergence of $\phi_{\beta}$ to $\phi$ is uniform, each cluster point of a sequence $\{U_{\beta}\}_{\beta\geq 0}$ with $U_{\beta} \in O_\beta$ for all $\beta>0$
 is an optimal solution of the problem~\eqref{a-PiPjcicj3}, see~\cite[p.266]{RockafellarWets}.
The cluster point is unitary because $U(n)$ is closed.  Thus the optimization problem~
\eqref{a-PiPjcicj3} must have a unitary optimal solution.
\end{proof}

By Theorem~\ref{th-main}, solving~\eqref{a-relaxU} is equivalent to solving~\eqref{a-optiU}. 
\section{Numerical examples}\label{sec-4}
 We implemented Algorithm~\ref{alg:bundle} for solving~\eqref{a-relaxU} in  Matlab (version 8.1.0.604 (R2013a)). The instances are generated during the propagation in the max-plus curse of dimensionality free 
method applied to  an optimal control problem on $SU(4)$ arising in the quantum optimal gate synthesis~\cite{eneaneyphys}. The parameters in Algorithm~\ref{alg:bundle} are chosen to be as follows: $\mu=0.5$, $\epsilon=1$e-8 and $\gamma=0.5$.
Every convex optimization problem within Algorithm~\ref{alg:bundle} is
solved by the standard package CVX~\cite{GrantBoydcvx} with solver SDPT3~\cite{sdpt4}.
To make a comparison, we also solved the same instances of~\eqref{a-relaxU} by the interior point algorithm (using the package CVX and calling the solver SDPT3).
The computations were performed on a single core of an Intel 12-core running
at 3GHz, with 48Gb of memory.

We compare in Figure~\ref{figurelabel} the computation time in seconds of solving one single instance of the problem~\eqref{a-relaxU}, for a different number of basis functions $m$, via two methods:  \roml{1}. the bundle method (described in Algorithm~\ref{alg:bundle})  and \roml{2}. the interior point method (using the package CVX).
 We observe that the time required by the interior point method (the green curve) grows much faster than  
the time required by  the bundle method (the blue curve). The stable computation cost of the bundle method 
with respect to the number of basis functions $m$ is, as we mentioned in Section~\ref{subsec-bundle}, critical for improving precision order in 
the max-plus curse of dimensionality free method.

In Figure~\ref{figurelabel2} we provide more details of the numerical results. We show in three subfigures the computation time in seconds and the number of iterations $K_0$ of the bundle method as well as 
the difference between the optimal value obtained by the bundle method $\delta_{bundle}$ and the one obtained by the interior point method $\delta_{cvx}$. 
Note that  for almost all tested $m$, the difference $\delta_{cvx}-\delta_{bundle}$ is less than 5e-8, which is of the same order as the duality gap obtained by
the package CVX.
It is interesting to remark that the number of iterations $K_0$ of the bundle method does not seem to increase as $m$ increases.
This explains why the computation time of the bundle method grows  slowly with respect to $m$: as we mentioned in Remark~\ref{rem-cb} the computation time required by 
Algorithm~\ref{alg:bundle} is bounded by
$$
\sum_{k=1}^{K_0} (c(k)+O(m))\enspace.
$$
Hence if $K_0$ does not increase with $m$, the computation time required by Algorithm~\ref{alg:bundle} is only linear to $m$.

   \begin{figure}[thpb]
      \centering
      \includegraphics[scale=0.4]{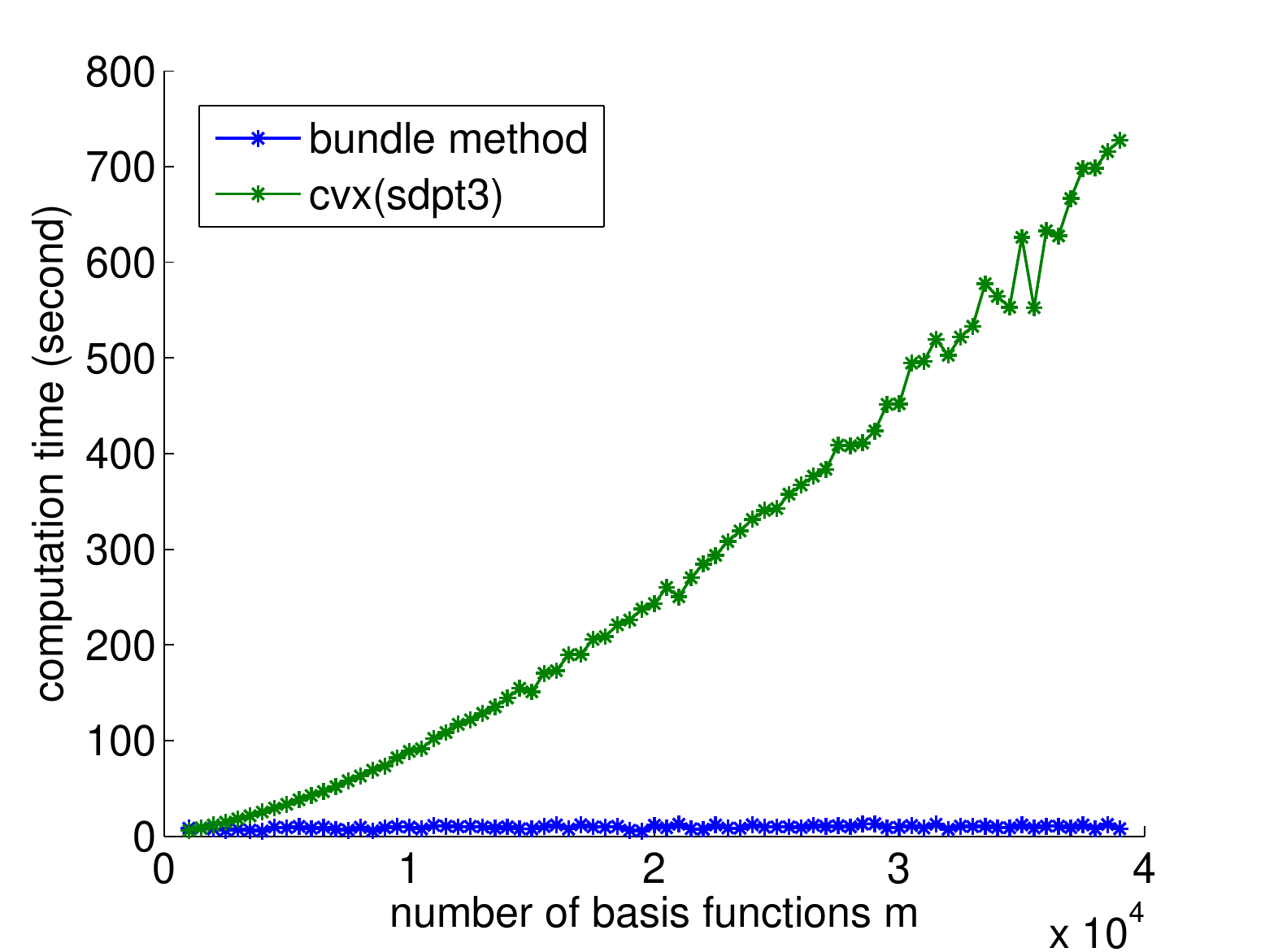}
      \caption{Computation time in seconds V.S. the number of basis functions $m$, obtained by the bundle method (blue curve) and by the interior point algorithm (green curve) for solving~\eqref{a-relaxU}}
      \label{figurelabel}
   \end{figure}
   
 \begin{figure}[thpb]
      \centering
      \includegraphics[scale=0.4]{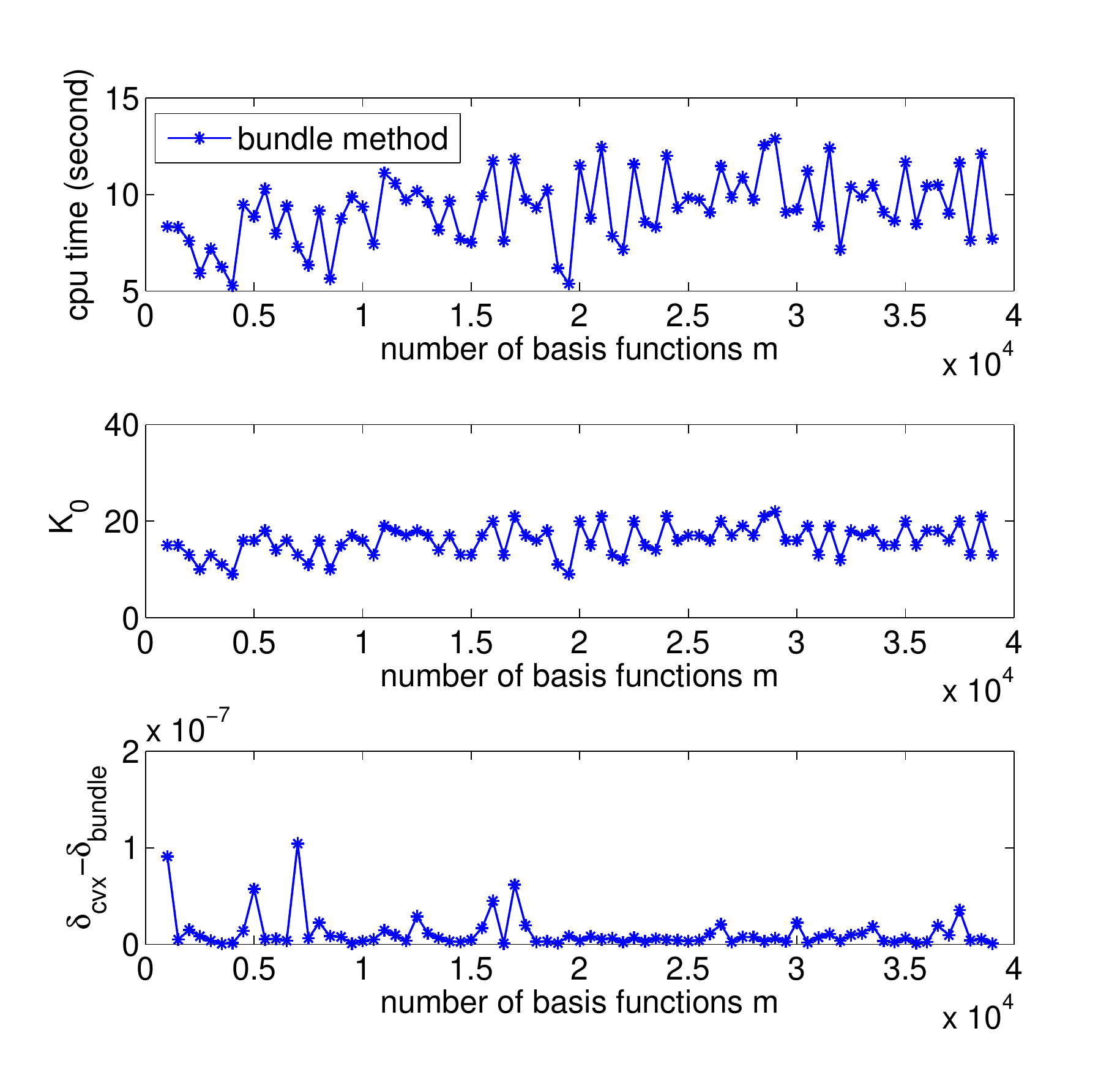}
      \caption{Computation time in seconds of the bundle method V.S. the number of basis functions $m$ (top), number of iterations $K_0$ of the bundle method V.S. the number of basis functions $m$ (middle),  optimal value obtained by cvx $\delta_{cvx}$ minus optimal value obtained by the bundle method $\delta_{bundle}$  V.S. the number of basis functions $m$ (bottom)}
      \label{figurelabel2}
   \end{figure}






\section*{ACKNOWLEDGMENT}

\bibliographystyle{alpha}
\bibliography{biblio}

\end{document}